 \newcommand{\fq}{\ensuremath{\mathbb{F}_q}}
 \newcommand{\tm}{\ensuremath{\mathbf{T}\left( m, \mathbb{F}_q \right) }} 
 \newcommand{\fqm}{\ensuremath{\mathbb{F}_q^\times}}
 \newcommand{\szp}{\ensuremath{\mathbf{SP}(m,\mathbb{F}_q)}} 
 \newcommand{\Sl}{\ensuremath{\mathbf{SL}\left( 2, \mathbb{Z}_3 \right) }} 
 \newcommand{\Z}{\ensuremath{\mathbf{Z}}}
 \newcommand{\Zb}{\ensuremath{\mathbb{Z}}}
 \newcommand{\UT}{\ensuremath{{\mathbf{U} \left( 3, \mathbb{Z}_3 \right)}}}
 \newcommand{\UTsz}{\ensuremath{{\mathbf{U} \left( 3, \mathbb{Z}_3 \right)}\rtimes{\ensuremath{\mathbb{Z}_3^\times}}}}
 \newcommand{\zh}{\ensuremath{\mathbb{Z}_3}}
 \newcommand{\R}{\ensuremath{\mathcal{R} }} 
 \newcommand{\M}{\ensuremath{\mathcal{M} }} 
 \newcommand{\I}{\ensuremath{\mathcal{I} }} 
 \newcommand{\G}{\ensuremath{\mathbf{G}}} 
 \newcommand{\D}{\ensuremath{\mathbf{D}_n}} 
 \newcommand{\A}{\ensuremath{\mathbf{A}}} 
 \newcommand{\B}{\ensuremath{\mathbf{B}}} 
 \newcommand{\Sim}{\ensuremath{\mathbf{S}}} 
 \newcommand{\N}{\ensuremath{\mathbf{N}}} 
 \newcommand{\sg}{\ensuremath{\mathbf{S}}} 
 \newcommand{\sus}{\ensuremath{S}} 
 \newcommand{\di}{\ensuremath{\mathbf{D}}}  
\newcommand{\za}{\ensuremath{\mathbb{Z}_{p^{\alpha}} }} 
\newcommand{\al}{\ensuremath{\alpha}}
\theoremstyle{plain}
\newtheorem{theo}{Theorem}[section]
\newtheorem{lem}[theo]{Lemma}
\theoremstyle{definition}
\begin{document}

\title[Equation solvability over semipattern groups]{The complexity of the equation solvability problem over semipattern groups}
\author[A.~F\"oldv\'ari]{Attila F\"oldv\'ari}
\address{
University of Debrecen,
Institute of Mathematics,
Pf.~400, Debrecen, 4002, Hungary
}
\email{foldvari.attila@science.unideb.hu}
\thanks{This research was partially supported by the Hungarian National Foundation for Scientific Research grant no.~K109185.}
\date{15 March, 2016}

\begin{abstract}
The complexity of the equation solvability problem is known for nilpotent
groups, for not solvable groups and for some semidirect products of 
Abelian groups.
We provide a new polynomial time algorithm for deciding the equation solvability problem
over certain semidirect products, where the first factor is not necessarily Abelian. 
Our main idea is to represent such groups as matrix groups, 
and reduce the original problem to 
equation solvability over the underlying field. 
Further, 
we apply this new method to give a much more efficient algorithm for equation solvability over nilpotent rings than previously existed. 
\end{abstract}

\keywords{semipattern groups, pattern groups, equivalence, equation solvability, computational complexity, polynomial time algorithm, nilpotent rings, matrix rings}

\subjclass[2010]{20F10, 20G40, 16N40, 68Q17}

\maketitle

\section{Introduction}

One of the oldest problems of algebra is the equation solvability problem over a given algebraic structure.  
Nowadays, 
many such classical problems arise in a new perspective, 
namely to consider their computational complexity. 
In this paper we investigate the complexity of the equation solvability problem over finite groups and rings. 

The \emph{equation solvability problem} over a finite group $ \G $  asks whether or not two group expressions (i.e.\ products of variables and elements of $ \G $) can attain the same value for some substitution over $ \G $.
In other words, 
for the equation solvability problem, 
one needs to find if there exists at least one substitution satisfying the equation. 
Another interesting problem is whether or not \emph{all} substitutions satisfy the equation. 
The \emph{equivalence problem} over a finite group $ \G $  asks whether or not two group expressions $ f $ and $ g $ are equivalent over $ \G $ (denoted by $\G\models f \approx g $), 
that is whether or not  $ f $ and $ g $ determine the same function over $ \G $.
 
First Burris and Lawrence \cite{Burris} investigated the complexity of the equivalence problem over finite groups. 
They proved that if a group $ \G $ is nilpotent or $ \G \simeq \D $, the dihedral group for odd $ n $, then the equivalence problem for $ \G $ has polynomial time complexity. 
They conjectured that the equivalence problem for $ \G $ is in polynomial time if $ \G $ is solvable, and coNP-complete otherwise. 
Horv\'{a}th and Szab\'o \cite{HG3} confirmed the conjecture for $ \G \simeq \A \rtimes \B $, where $ \A $ and $ \B $ are Abelian groups such that the exponent of $ \A $ is squarefree and $ (|\A| , |\B|) = 1 $.
Later Horv\'{a}th \cite{HG15JA} generalized this result to semidirect products $ \A \rtimes \B $, where $ \A $ and $\B/C_\B(\A)$ are Abelian groups (here $C_\B(\A)$ denotes the centralizer of $ \A$ in $\B$). 
Horv\'ath, Lawrence, M\'erai and Szab\'o \cite{HLM1} proved the coNP-complete part of the conjecture. 
But the complexity of the equivalence problem over many solvable, not nilpotent groups is not determined, yet. 
Three of the smallest groups, for which this complexity is not known, are $\Sim_4$, $\Sl$ and a non-commutative group of order $54$. 
See \cite{HG15JA} for a more comprehensive list. 

Even less is known about the equation solvability problem.
Goldmann and Russel \cite{goldmann, russel} proved that if $ \G $ is nilpotent then the equation solvability problem over $ \G $ is solvable in polynomial time, while if $ \G $ is not solvable, then the equation solvability problem is NP-complete. 
Little is known for solvable, not nilpotent groups.
Horv\'ath proved in \cite[Corollary 2]{HG15JA} that the equation solvability problem over $\G$ is solvable in polynomial time for certain groups $\G \simeq \A \rtimes \B$, where $\A \simeq \Z_{p^k}$ or $\Z_{2  p^k}$ or $\Z_p^k $ and $\B$ is commutative. 
Note that all results for both the equivalence and the equation solvability problem over solvable, not nilpotent groups are about groups $\A \rtimes \B$, where $\A$ is Abelian. 

One of the groups of small order, for which the equation solvability problem is unknown, is the group $\UTsz$.
Here, 
$\UT$ denotes the noncommutative group of $3 \times 3$ upper unitriangular matrices over $\zh$. 
Horv\'ath explicitly asks in \cite[Problem~4]{HG15JA} the complexity of the equivalence and equation solvability problems over this group. 
The group $\UTsz$ is isomorphic to a special subgroup of the $ 3 \times 3 $ upper triangular matrices over $ \zh$. 
Motivated by the definition of pattern groups from \cite{pattern2, pattern1}, 
we call a group $\A \rtimes \B$ a \emph{semipattern} group, 
if $\A$ is a subgroup of the group of upper unitriangular matrices, 
and $\B$ is a subgroup of the diagonal matrices.
We give the precise definition of semipattern groups in Section~\ref{spcs}.
The main result of the paper is the following. 
 
\begin{theo}
	\label{fotetel}
	The equation solvability problem over semipattern groups is solvable in polynomial time.
\end{theo}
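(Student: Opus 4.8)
The plan is to exploit the concrete matrix description of a semipattern group $\G = \A \rtimes \B$ --- where $\A$ is a pattern subgroup of the upper unitriangular group $\um$ and $\B$ is a subgroup of the invertible diagonal $m\times m$ matrices over $\fq$ --- and to transfer the equation solvability problem over $\G$ to a question about polynomial equations over the underlying field $\fq$. First I would normalise the input: an equation $f \approx g$ over $\G$ has a solution if and only if the word $w := f g^{-1}$ can be made to evaluate to the identity, so it suffices to decide, for a group word $w(x_1,\dots,x_n)$ with coefficients from $\G$, whether $w=1$ is solvable over $\G$. Each variable $x_i$ is then replaced by a generic element of $\G$: its diagonal part contributes entries $d_{i,1},\dots,d_{i,m}$ ranging over $\fqm$ and constrained by the relations that cut out $\B$, and its unipotent part contributes entries $u_{i,k\ell}$ $(k<\ell)$ ranging over $\fq$ and constrained by the pattern relations that cut out $\A$; an occurrence of $x_i^{-1}$ is rewritten using that the inverse of a unitriangular matrix is polynomial in its entries and that $d^{-1}=d^{q-2}$ in $\fq$. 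Carrying out the matrix multiplications prescribed by $w$ and substituting the coefficients, the equation $w=1$ becomes a matrix identity $W=I$, that is, one polynomial equation over $\fq$ per entry position.

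The second step is to peel off the abelian direction. The diagonal of $W$ is the entrywise product of the diagonals of the factors, so it records the image of $w$ under the projection $\G\to\G/\A\cong\B$; since $\B$ is abelian, this part of the equation is, after passing to discrete logarithms in the cyclic group $\fqm$, a system of linear congruences modulo $q-1$ in the exponents of the $d_{i,k}$, whose solvability and whose entire solution set --- a coset of a subgroup of $\B^n$, hence describable by polynomially much data --- can be decided by integer linear algebra, exactly as in the known treatment of equation solvability over abelian groups. Conditioned on this, what is left is the vanishing of all strictly-upper entries of $W$: this is equation solvability over the \emph{nilpotent} group $\A$, but now carrying ``coefficients'' that involve the diagonal unknowns through the conjugations forced by the $\B$-action on $\A$. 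Here I would use the nilpotency (pattern) filtration: the entries of $W$ on the first superdiagonal are linear in the unknowns $u_{i,k,k+1}$ up to scalar factors contributed by the $\B$-part, those on the second superdiagonal become linear once the first-superdiagonal unknowns have been fixed, and so on. After substituting the parametrisation of the diagonal solution set obtained in the abelian step, the system thus decomposes into a bounded number of linear systems over $\fq$ --- each solved and propagated to the next layer by Gaussian elimination --- together with the side conditions $d_{i,k}\neq 0$, which one enforces with auxiliary variables $t_{i,k}$ satisfying $d_{i,k}t_{i,k}=1$. Since the matrix size $m$ is fixed there are only boundedly many layers, each involving a polynomial-size linear-algebra computation, so the whole procedure runs in polynomial time; and by construction it produces a solution over $\fq$ precisely when one exists, which by the matrix description is precisely when $w=1$ is solvable over $\G$.

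The step I expect to be the main obstacle is exactly the interaction between the non-abelian factor $\A$ and the torus $\B$. Conjugating a unipotent matrix by a diagonal one rescales its $(k,\ell)$ entry by $d_k/d_\ell$, so the coefficients appearing in the ``linear layers'' above are not elements of $\fq$ but monomials in the diagonal unknowns; one has to check carefully that, once the $\B$-part of a putative solution is fixed --- more precisely, once one ranges over the polynomially-described set of $\B$-solutions from the abelian step --- each layer genuinely becomes an honest linear system over $\fq$, that propagating an affine solution set from one layer to the next does not silently reintroduce higher-degree dependence among the remaining unknowns, and that the nonvanishing conditions on the $d_{i,k}$ stay compatible with this elimination. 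Proving these facts --- that the reduction to field equations is both faithful and size-preserving, and that the field equations produced have the restricted, essentially layerwise-linear, shape guaranteed by the unitriangular structure --- is the technical heart of the argument; granted them, the polynomial time bound and the correctness of the reduction are immediate.
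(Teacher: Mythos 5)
Your first step coincides with the paper's: represent elements of the semipattern group as upper triangular matrices, replace each variable by a generic matrix whose diagonal entries range over the subgroups $\sg_i\leq\fqm$ and whose strictly upper entries range over $\fq$, and translate $w=I$ into one polynomial equation over $\fq$ per matrix position. Two remarks on this part. First, you should say explicitly why the resulting polynomials have polynomial length: naively expanding a product of $n$ matrices gives $m^{n-1}$ monomials per entry, which is exponential; the paper's Lemma on multiplying upper triangular matrices shows that each nonzero monomial is determined by at most $m-1$ above-diagonal factors, so each entry is a sum of only $O(n^{m})$ products. Second, the paper at this point simply observes that the resulting system is a system of polynomial equations over $\fq$, written as sums of monomials, with some variables restricted to the subgroups $\sg_1,\dots,\sg_m$, and invokes the known polynomial-time algorithm for exactly such systems (\cite[p.~221, case~(d)]{HG15JA}) as a black box. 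That citation is doing the real work.

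Where your proposal has a genuine gap is precisely the place you flag as the ``technical heart'': the layerwise elimination. The difficulty is not merely technical. After solving the first-superdiagonal layer you obtain an affine solution set whose dimension can grow linearly with $n$; the second-superdiagonal equations are quadratic in the original unknowns (they contain products $u_{x,i,i+1}\,u_{y,i+1,i+2}$, and also products of two entries of the \emph{same} variable when a variable occurs twice in the word), so substituting the affine parametrisation of the first layer into them yields genuinely quadratic equations in the parameters together with the new unknowns. There is no reason these collapse to linear systems, and deciding solvability of general quadratic systems over $\fq$ is NP-hard, so ``propagate by Gaussian elimination'' cannot be the whole story. The same problem occurs with the diagonal: the coefficients in the unipotent layers are monomials $d_{i,k}/d_{i,\ell}$ in the diagonal unknowns, and the set of admissible diagonal assignments, while describable as a coset, has size exponential in $n$, so you cannot ``condition on'' a choice of it. The paper sidesteps all of this by not attempting a direct elimination at all: it hands the full polynomially-sized system, variables-in-subgroups constraints included, to Horv\'ath's theorem. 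To repair your argument you would either need to prove that theorem yourself or find a genuinely new argument that the layered nonlinearity is controllable; as written, the proof does not go through.
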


The group $\UTsz$ defined in \cite[Problem~4]{HG15JA} is in fact a semipattern group,
thus Theorem~\ref{fotetel} answers Horv\'ath's question completely. 
Further, 
from Theorem~\ref{fotetel} the equivalence problem over semipattern groups is solvable in polynomial time, as well.
Indeed, 
it is known that for a group $\G$ if the equation solvability problem is solvable in polynomial time, 
then the equivalence problem is solvable in polynomial time, as well.

In the proof of Theorem~\ref{fotetel} we reduce the solvability of the input equation over a matrix group over a finite field to the solvability of a system of equations over the same field.
Then we apply some results over finite rings. 
Therefore, we summarize the known results over rings. 

The \emph{equation solvability problem}
over a finite ring $ \R $  asks whether or not two polynomials can attain the same value for some substitution over $ \R $.
The \emph{equivalence problem}
over a finite ring $ \R $  asks whether or not two polynomials are equivalent over $ \R $ i.e.\ if they determine the same function over $ \R $.

The complexity of these questions was completely characterized in the past two decades. 
Hunt and Stearnes \cite{HHRS90} investigated the equivalence problem for finite commutative rings. 
Later Burris and Lawrence \cite{BSLJ93} generalized their result to non-commutative rings. 
They proved that the equivalence problem for $\R$ is solvable in polynomial time if $\R$ is nilpotent, and is coNP-complete otherwise.  

The proof of Burris and Lawrence reduces the satisfiability (SAT) problem to the equivalence problem by using long products of sums of  variables. 
Nevertheless, if we expand this polynomial into a sum of monomials then the length of the new polynomial may become exponential in the length of the original polynomial. 
Such a change in the length suggests that the complexity of the equivalence problem might be different if the input polynomials are restricted to be written as sums of monomials.
This motivated Lawrence and Willard \cite{LJWR97} to introduce the \emph{sigma equivalence} and \emph{sigma equation solvability problems}, 
where the input polynomials are given as sums of monomials. 
Lawrence and Willard conjectured that if the factor by the Jacobson radical is commutative then the sigma equivalence problem is solvable in polynomial time, and is coNP-complete otherwise. 
Szab\'o and V\'ertesi proved the coNP-complete part of the conjecture in \cite{SZCSVV11}.
Horv\'ath confirmed the conjecture for commutative rings in \cite{HG12GMJ}.
The polynomial part of this conjecture is completely proved in the manuscript \cite{HGLJWR15}. 

Most of the results for the [sigma] equation solvability problem follow from the corresponding result for the [sigma]
equivalence problem.
In particular, 
from the argument of Szab\'o and V\'ertesi follows that if the factor by the Jacobson radical is not commutative then the sigma equation solvability problem is NP-complete.
Horv\'ath, Lawrence and Willard \cite{HGLJWR15} proved that if this factor is commutative then the sigma equation solvability problem is solvable in polynomial time.  
Thus, the sigma equation solvability problem is completely characterized. 

For the general equation solvability, arguments of Burris and Lawrence from \cite{BSLJ93} yield that if the ring is not nilpotent then the problem is NP-complete. 
Horv\'ath in \cite{HG11} proved that the equation solvability problem is solvable in polynomial time otherwise. 

\begin{theo}[{\cite[Theorem~1.2]{HG11}}]
	\label{nr}
	If $ \R $ is a finite, nilpotent ring then the equation solvability problem over $ \R $ is solvable in polynomial time. 
\end{theo}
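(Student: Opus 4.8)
The goal is to decide, for a polynomial $p=f-g$ over a finite nilpotent ring $\R$, whether $p$ attains the value $0$ on some substitution from $\R$; I would do this by an induction along the nilpotency filtration which, at each step, produces an equation of polynomial length over a strictly smaller nilpotent ring. Two harmless preparatory reductions: since $\R$ is finite it decomposes as a ring direct product $\R=\prod_{p}\R_{(p)}$ of its $p$-primary ideals (with $\R_{(p)}\R_{(q)}=0$ for $p\neq q$), each $\R_{(p)}$ again nilpotent, and $f\approx g$ is solvable over $\R$ exactly when the ``$p$-part'' of the equation is solvable over $\R_{(p)}$ for every $p$; so I may assume $(\R,+)$ is a $p$-group, and I replace $f\approx g$ by $p\approx 0$.

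Now induct on the nilpotency class $c$ of $\R$, with $|\R^{c-1}|$ as a secondary parameter. If $c\le 2$ then $\R^2=0$, every monomial with at least two factors is identically $0$, and $p$ is equivalent to an affine map $r_0+\sum_i n_i x_i$ with $r_0\in\R$ and $n_i\in\Zb$ (its ``linear part'', extractable from the term in polynomial time); the equation $r_0+\sum_i n_i x_i\approx 0$ over the finite abelian group $(\R,+)$ is solvable iff $r_0\in\gcd_i(n_i)\cdot\R$, a classical polynomial-time linear-algebra test. For the inductive step put $I:=\R^{c-1}$, a nonzero central ideal with $I\R=\R I=I^2=0$. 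The key point is a perturbation lemma: for every $\bar t\in I^n$,
\[
 p(\bar x+\bar t)=p(\bar x)+\sum_{i=1}^{n}c_i t_i ,
\]
where $c_i\in\Zb$ is the coefficient of the bare monomial $x_i$ in $p$; this is proved by an easy induction on the structure of the term (using $I\R=\R I=I^2=0$ one checks that every \emph{product} node contributes $0$, so the $c_i$ form the linear part of $p$ and are computable in polynomial time). Hence the set of values of $p$ on $\R^n$ is a union of cosets of the subgroup $I_0:=\sum_i c_i I=d\cdot I$ with $d=\gcd_i(c_i)$, so $p$ attains $0$ iff it attains some value in $I_0$, i.e.\ iff the induced equation $\bar p\approx 0$ is solvable over $\R/I_0$. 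If $I_0\neq 0$ this completes the step, because $\R/I_0$ is nilpotent with either class $c-1$ (when $I_0=I$) or class $c$ but $|(\R/I_0)^{c-1}|<|I|$; in either case it is strictly smaller in the induction order, and since these quotients depend only on $\R$ the recursion has bounded depth and every stage costs polynomial time.

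The main obstacle is the degenerate case $I_0=0$, i.e.\ when $\exp(\R^{c-1})$ divides every $c_i$ and $p$ is therefore insensitive to the $\R^{c-1}$-component of its inputs: one cannot just pass to $\R/\R^{c-1}$, since there are polynomials with no zero over $\R$ whose image over $\R/\R^{c-1}$ is identically $0$ (a value landing in $\R^{c-1}\setminus\{0\}$ can no longer be perturbed to $0$). To deal with this I would exploit the ring extension $0\to I\to\R\to\R/I\to 0$: fixing an additive section $s\colon\R/I\to\R$ yields a biadditive factor set $\vartheta\colon(\R/I)\times(\R/I)\to I$ with $s(\bar a)s(\bar b)=s(\bar a\bar b)+\vartheta(\bar a,\bar b)$, and a term-induction then shows, for $\bar z\in(\R/I)^n$, that $p(s(\bar z))=s(\bar p(\bar z))+E(\bar z)$ with
\[
 E(\bar z)=\sum_{\nu}\varepsilon_\nu\,\vartheta\bigl(g_\nu(\bar z),h_\nu(\bar z)\bigr)\in I ,
\]
the sum over the polynomially many product nodes $\nu$ of the term, $g_\nu,h_\nu$ polynomials over $\R/I$ and $\varepsilon_\nu\in\{\pm 1\}$. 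Thus $p\approx 0$ is solvable over $\R$ iff the system $\{\,\bar p(\bar z)=0 \text{ over } \R/I,\ E(\bar z)=0 \text{ over } I\,\}$ is solvable, and the crux --- the part I expect to require the most care --- is to fold this system into a single equation of polynomial length over a nilpotent ring that is again strictly smaller in the induction order (morally, an equation over $\R/I$ together with $I$, the biadditivity of $\vartheta$ being exactly what keeps $E$ inside the polynomial formalism), so that the induction hypothesis applies and the whole algorithm runs in polynomially many polynomial-time stages.
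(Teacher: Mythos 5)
Your reduction to $p$-primary components, your base case, and your perturbation lemma for $I=\R^{c-1}$ (with the resulting coset structure of the image modulo $I_0=\gcd_i(c_i)\cdot I$, and the descent to $\R/I_0$ when $I_0\neq 0$) are all sound. But the proof is not complete: the degenerate case $I_0=0$, which you yourself identify as the crux, is exactly where the whole difficulty of the problem lives, and your sketch does not resolve it. Two concrete problems. First, an \emph{additive} section $s\colon\R/I\to\R$ need not exist (take $\R=2\Zb_8\subset\Zb_8$, so $I=\R^2=\{0,4\}$ and $\R/I\cong\Zb_2$ additively, but every preimage of the generator has order $4$); with only a set-theoretic section, sum nodes also produce error terms governed by the additive factor set, which is not biadditive, so your formula $p(s(\bar z))=s(\bar p(\bar z))+E(\bar z)$ with $E$ a sum over product nodes alone is not justified. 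Second, and more seriously, even granting that formula, the object $E(\bar z)=\sum_\nu\varepsilon_\nu\vartheta(g_\nu(\bar z),h_\nu(\bar z))$ is not a ring polynomial over $\R/I$, over $I$, or over any single ring in the formalism your induction hypothesis covers; the promised ``folding'' of the system $\{\bar p=0,\ E=0\}$ into one equation over a strictly smaller nilpotent ring is asserted, not constructed, and there is no argument that the induction measure actually decreases for whatever hybrid structure would carry it. As it stands the argument terminates only when the non-degenerate case happens to occur at every level.

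For contrast, the paper avoids this filtration-by-filtration descent entirely. After the same $p$-primary reduction, it invokes Wilson's theorem to present $\R$ as a quotient of a ring $\M$ of $m\times m$ matrices over $\Zb_{p^{\alpha}}$ whose entries on or below the diagonal are multiples of $p$; the equation $F=0$ over $\M$ then becomes the entrywise system $f_{i,j}=0$ over $\Zb_{p^{\alpha}}$, and the key combinatorial point is that every nonzero monomial of an entry has length at most $m\alpha-1$ (each on-or-below-diagonal factor contributes a $p$, and $p^{\alpha}=0$), so the system has polynomial length. Solvability of that system is then delegated to the Horv\'ath--Lawrence--Willard result for sums of monomials over $\Zb_{p^{\alpha}}$, and the quotient by the ideal $\I$ costs only a constant factor $|\I|$. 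If you want to salvage your intrinsic approach, the degenerate case is precisely where you would need an input of comparable strength to that external theorem; it will not fall to the factor-set manipulation as sketched.
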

 

Horv\'ath uses Ramsey's theorem in the proof of Theorem~\ref{nr}. 
He defines a number $r$ that depends only on the ring $\R$. 
Then he proves that the image of every polynomial can be obtained by substituting $ 0 $ into all but $r$-many variables.
Thus one can decide whether or not $f=0$ is solvable over $\R$ in $O\left( ||f||^r\right) $ time. 
However, this number $ r $ is huge in the size of the ring. 
In fact, $r$ is greater than $|\R|^{|\R|^{\dots^{|\R|}}}$, where the height of the tower in the exponent  is the nilpotency class of $\R$.
Horv\'ath specifically asks in \cite[Problem~3]{HG11}
whether or not this number $r$ can be decreased. 

In the second half of the paper we give a new proof of Theorem~\ref{nr}.
Our algorithm is much more efficient than Horv\'ath's. 
Wilson \cite{RW73CM} characterizes nilpotent rings with the help of special kind of nilpotent matrix rings. 
We can decide the equation solvability problem over these special matrix rings similarly as we do over semipattern groups in Theorem~\ref{fotetel}. 
In particular, we show that over a nilpotent ring $\R$ we can decide in $O\left( ||f||^{ |\R|^{2\log \R} \log^5|\R| }\right) $ time whether or not $f=0$ is solvable, 
thus providing a partial answer to Problem~3 in \cite{HG11}, as well. 
We mention that in a completely independent way, 
K\'arolyi and Szab\'o also found a way to decrease the exponent $r$ in \cite{karszab}.

In Section~\ref{bev} we summarize the notations, definitions and theorems, that we use in the paper. 
In particular, in Section~\ref{spcs} we review the definition of pattern groups, then we define semipattern groups.
In Section~\ref{2.3} we discuss the generalization of the equation solvability problem over rings for systems of equations. 
We are going to apply these results in order to prove Theorems~\ref{fotetel}~and~\ref{nr}.
In Section~\ref{nil} we lay the groundwork for the proof of Theorem~\ref{nr}. 
In Section~\ref{2.} we prove Theorem~\ref{fotetel}. 
We use ideas of this proof in Section~\ref{3.}, where we prove Theorem~\ref{nr}. 


\section{Preliminaries \label{bev}}


\subsection{Semipattern groups\label{spcs}} 
Let $\fq$ denote the finite field of $q$ elements.
Let us consider the group $\tm$ of $m\times m$ upper triangular matrices, that is those matrices whose  elements under the diagonal are zero and the elements in the diagonal are non-zero:

\[
\tm = \left\{ \begin{pmatrix}
s_1 & a_{1,2}& a_{1,3} & \dots & a_{1,m} \\
0 & s_2 & a_{2,3} & \dots & a_{2,m}  \\ 
0 & 0 & s_3 & \dots & a_{3,m} \\ 
\vdots & \vdots & \vdots & \ddots & \vdots  \\
0 & 0 & 0 & \dots & s_m \end{pmatrix} 
: s_i\in \fqm,  a_{i,j}  \in \fq , 1 \leq i < j \leq m \right\}.
\] 
Here the group operation is the matrix multiplication. 

Let the $m\times m$ identity matrix denoted by  $I$.
Let $I_{i,j}$ denote the $m\times m$ matrix whose elements are all zero except for the $j^{th}$ element in the $i^{th}$ row, which is 1. 
Let $P \subseteq\{(i,j) : 1 \leq i<j \leq m\}$.
Let 
\[
\N_P =
\{ I+ \sum_{(i,j) \in P} a_{i,j} I_{i,j}: a_{i,j}\in \fq \}.
\]
Thus, $\N_P$ contains all those upper triangular matrices, where every element in the diagonal is $1$, and every element whose position is not occurring in P has to be $0$. 
If $\N_P$ is a subgroup of $\tm$, then we call $\N_P$ a \emph{pattern} group.
For more details on pattern groups, see e.g.~\cite{pattern2,pattern1}.
Let $\sg_1,\sg_2, \dots , \sg_m$ be subgroups of $\fqm$ and let $\di$ be the set of $m\times m$ matrices over $\fq$ whose $i^{th}$ element in the diagonal is from $\sg_i$ $(1\leq i \leq m)$:
\[
\mathbf{D}=
\left\{ 
\begin{pmatrix}
s_1 &0&0 & \dots & 0 \\
0 & s_2 & 0 & \dots & 0  \\ 
0 & 0 & s_3 & \dots &0 \\ 
\vdots & \vdots & \vdots & \ddots & \vdots  \\
0 & 0 & 0 & \dots & s_m \end{pmatrix} 
: s_1\in \sg_1,  s_2\in \sg_2, \dots,  s_m\in \sg_m  \right\}.
\] 

If $\N_P$ is a pattern group then $\N_P\di$ is a subgroup of $\tm$.
Then we call $\N_P\di$ a \emph{semipattern group}  and we denote such a group by $\szp$.
Further, we note that $\N_P \lhd \N_P\di$ and $ \N_P\di \cong \N_P \rtimes \di$. 
The group $\UTsz$ defined in \cite[Problem~4]{HG15JA} is in fact a semipattern group:
\[
\UTsz = \left\{ 
\begin{pmatrix}
1 & a& b  \\
0 & s & c   \\ 
0 & 0 & 1 & 
\end{pmatrix} 
: s\in \zh^\times,  a, b, c  \in \zh \right\}.
\]

\subsection{Further notations\label{2.3}} 
Let ${\R}$ be a commutative, unital ring,  $\sus_1, \dots$ $\dots,\sus_m$ be  subsets of ${\R}$.
For nonnegative integers $n,  l_1, \dots l_m$, let  
$X=\left\{ \, x_1, \dots, x_n \, \right\}$, 
$Y_1=\left\{ \, y_{1,1}, \dots , y_{1,l_1}\, \right\}$, $\dots $, 
$Y_m=\left\{ \, y_{m,1}, \dots, y_{m,l_m} \, \right\}$ 
be pairwise disjoint sets  of variables. 
We say that \emph{${f=g}$ is solvable over ${\R}$ for substitutions from $\R,\sus_1, \dots, \sus_m$} (and write $ f|_{\R,\sus_1, \dots, \sus_m} $ $= g|_{\R, \sus_1, \dots, \sus_m}$ is solvable over $\R$) if there exist 
$a_1,\dots a_n \in{\R}$, 
$s_{1,1},\dots s_{1,l_1}\in \sus_1$, 
$\dots$, 
$s_{m,1}, \dots, s_{m,l_m}\in \sus_m$ 
such that the two polynomials attain the same value on this substitution:
\begin{align*}
f(a_1,\dots a_n ,  s_{1,1},\dots s_{1,l_1} , \dots, s_{m,1}, \dots, &s_{m,l_m} )=\\
g(a_1,\dots & a_n,  s_{1,1},\dots s_{1,l_1} , \dots, s_{m,1}, \dots, s_{m,l_m})\\
\end{align*}
For proving Theorem~\ref{fotetel}, 
we will directly apply the following result of Horváth \cite{HG15JA}. 
\begin{theo}[{\cite[p.~221, case~(d)]{HG15JA}}]
	\label{egyrszmo}
	Let $\fq$ be a finite field.
	Let $\sg_1, \dots{}, \sg_m$ be subgroups of $\fqm$.
	Let 
$f_1, \dots{}, f_k \in \fq[x_1, \dots x_n, y_{1,1}, \dots, y_{m,l_m} ]$ 
be a polynomial, written as a sum of monomials.
	Then it can be decided whether the system of equations
	\begin{align*}
	f_{1} &| _{\fq, \sg_1, \dots{}, \sg_m}= 0 \\
	& \vdots{}\\
	f_k &|_{\fq, \sg_1, \dots{}, \sg_m}= 0\\ \end{align*} is solvable over $\fq$ in $O\left( max_{1\leq i\leq k}||f_i||^{k\cdot q}\right) $ time.
\end{theo}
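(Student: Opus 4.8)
The plan is to reduce the \emph{solvability} of the system to the \emph{non}-equivalence of a single, explicitly built polynomial, and then to decide that equivalence to $0$ over $\fq$ by an elementary normal-form computation, using that exponents of the $x$-variables reduce modulo $q-1$ and exponents of the $y_{i,l}$ reduce modulo $|\sg_i|$.

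First I would exploit the field identity $1-\beta^{q-1}=1$ if $\beta=0$ and $1-\beta^{q-1}=0$ otherwise, valid for every $\beta\in\fq$. Put
\[
F \;=\; \prod_{i=1}^{k}\bigl(1 - f_i^{\,q-1}\bigr).
\]
For any admissible substitution (the $x$'s from $\fq$, the $y_{i,\cdot}$'s from $\sg_i$) the value of $F$ is $1$ exactly when $f_1=\dots=f_k=0$ at that point, and $0$ otherwise; hence the system is solvable over $\fq$ for these substitutions \emph{iff} $F$ is not the identically zero function on $\fq^{n}\times\sg_1^{l_1}\times\dots\times\sg_m^{l_m}$, i.e.\ iff $\fq\not\models F\approx 0$ with the stated variable ranges. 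Next I expand $F$ into a sum of monomials: each factor $1-f_i^{\,q-1}$ produces at most $1+\|f_i\|^{q-1}$ monomials, and each monomial of $F$ is a product of $k$ of these, so it has length at most $k(q-1)\max_i\|f_i\|$; multiplying the bounds gives $\|F\|=O\!\left(\max_{1\le i\le k}\|f_i\|^{\,k q}\right)$, with the implied constant depending only on $q$ and $k$.

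Second, I would decide $\fq\models F\approx 0$ directly. Using $x_j^{q}=x_j$ for the free variables and $y_{i,l}^{\,|\sg_i|}=1$ together with $y_{i,l}\ne 0$ for the restricted ones, reduce every $x_j$-exponent into $\{0,\dots,q-1\}$ and every $y_{i,l}$-exponent into $\{0,\dots,|\sg_i|-1\}$, then collect like monomials. Since $\{1,x,\dots,x^{q-1}\}$ is a basis of the $\fq$-space of functions $\fq\to\fq$ and $\{1,y,\dots,y^{|\sg_i|-1}\}$ is a basis of the $\fq$-space of functions $\sg_i\to\fq$ (a nonzero polynomial of degree below the size of the domain cannot vanish on it), the tensor product of these bases is a basis for the functions on the whole product domain; therefore the reduced, collected monomials are linearly independent as functions, and $F$ represents the zero function precisely when this reduced form is empty. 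This reduction and collection run in time polynomial in $\|F\|$, so the whole procedure terminates in $O\!\left(\max_{1\le i\le k}\|f_i\|^{\,k q}\right)$ time.

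I do not expect a deep obstacle: the only genuine idea is the passage from ``solvable'' to ``$F$ not identically zero'', and the rest is routine normal-form manipulation over a finite field. The two places that need care are (i) the monomial-count bookkeeping needed to land exactly at the exponent $kq$ (one must track both the number of monomials, roughly $\max_i\|f_i\|^{k(q-1)}$, and the per-monomial length, roughly $k(q-1)\max_i\|f_i\|$), and (ii) the verification that reducing exponents modulo $q-1$ (respectively $|\sg_i|$) and collecting really yields a canonical form detecting the zero function, i.e.\ that no further identities survive among the reduced monomials — which is exactly the basis statement above.
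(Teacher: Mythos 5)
Your argument is correct, but note that the paper does not prove Theorem~\ref{egyrszmo} at all: it is imported verbatim from \cite[p.~221, case~(d)]{HG15JA}, so there is no in-paper proof to compare against. Your route --- forming $F=\prod_{i=1}^{k}\bigl(1-f_i^{\,q-1}\bigr)$, observing that solvability of the system is equivalent to $F$ not vanishing identically on $\fq^{n}\times\sg_1^{l_1}\times\dots\times\sg_m^{l_m}$, and then testing this by reducing $x$-exponents via $x^q=x$ and $y_{i,l}$-exponents modulo $|\sg_i|$ and invoking the tensor-product basis $\{1,x,\dots,x^{q-1}\}\otimes\dots\otimes\{1,y,\dots,y^{|\sg_i|-1}\}$ --- is sound, and the exponent $k\cdot q$ in the stated bound is exactly the footprint of this construction, so you have in effect reconstructed the cited argument. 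The only loose thread is the very last step: ``polynomial in $\|F\|$'' for the collection of like monomials technically yields $O\bigl(\max_i\|f_i\|^{c\,kq}\bigr)$ for some constant $c$ rather than literally $O\bigl(\max_i\|f_i\|^{kq}\bigr)$; with a linear-time collection (trie or hashing on exponent vectors) the count comes out at $O\bigl(\max_i\|f_i\|^{k(q-1)+1}\bigr)$ monomial comparisons, which does land within the claimed bound, so this is a matter of saying one more sentence, not of a gap in the mathematics.
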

To prove Theorem~\ref{nr} we are going to use the following from \cite{HGLJWR15}.
\begin{theo}[{\cite{HGLJWR15}}]
	\label{er}
	Let $f_1, \dots{}, f_k \in \Zb_{p^{\alpha}}[y_1, \dots{} , y_n ]$ be  polynomials, written as sums of monomials.  
	Then it can be decided whether the system of equations
	\begin{align*}
	f_1&=0 \\
	&\vdots\\
	f_k&=0
	\end{align*}
	is solvable over $\Zb_{p^{\alpha}}$ in $O\left( max_{1\leq i \leq k} ||f_i||^{\al^2  k \cdot p^{ 2\alpha^2 }}\right) $ time. 
\end{theo}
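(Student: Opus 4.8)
The plan is to proceed in the spirit of the proof of Theorem~\ref{fotetel}: first cut the problem down to one in which only a bounded number of variables may receive a ``special'' value, and then dispatch the bounded residual instance over the prime field $\zp$ via Theorem~\ref{egyrszmo}. Let $v_p$ denote the $p$-adic valuation and $\varphi(p^{\al})=p^{\al-1}(p-1)$. The first, easy step is a degree reduction. For every $a\in\za$ and every $e\ge\al$ one has $a^{e}=a^{e+\varphi(p^{\al})}$: if $v_p(a)\ge 1$ then both sides vanish because $v_p(a)\cdot e\ge\al$, and if $a$ is a unit this is Euler's theorem, since $\varphi(p^{\al})=|\za^{\times}|$. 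Hence in each $f_i$ we may replace every power $y_j^{e}$ with $e\ge\al+\varphi(p^{\al})$ by $y_j^{e'}$ with $e'<\al+\varphi(p^{\al})$ and $e'\equiv e\pmod{\varphi(p^{\al})}$, without changing the function that $f_i$ induces on $\za$ and without increasing $\|f_i\|$; afterwards $\deg_{y_j}f_i<\al+p^{\al}$ for all $i,j$.

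The heart of the argument is the claim that there is a constant $N=N(p,\al,k)$, of the order of $\al^{2}k\,p^{2\al^{2}}$, such that the system is solvable over $\za$ if and only if it has a solution in which all but at most $N$ of the variables are assigned $0$. To prove it I would take a solution $(a_1,\dots,a_n)$ whose \emph{support} $\{j:a_j\neq 0\}$ is minimal and show that if this support exceeds $N$ then one of its variables can be reset to $0$ with all $k$ equations still satisfied, a contradiction. One splits the support according to $v_p(a_j)$. The indices with $v_p(a_j)\ge 1$ take one of only $p^{\al-1}-1$ values, and since $(p)$ is nilpotent of class $\al$, in every monomial that is nonzero under the substitution the variables of positive valuation contribute weighted degree at most $\al-1$, so at most $\al-1$ of them occur together; a pigeonhole/sunflower argument on how these variables meet the $k$ equations exhibits two of them playing interchangeable roles, one of which may be zeroed. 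The unit-valued variables are the genuinely new difficulty: to each such variable one attaches a bounded amount of information about its effect on the $k$ equations across the finitely many ideals $p^{r}\za$, and shows that once two unit-valued variables carry the same information one of them is redundant. Carrying out this bookkeeping carefully is exactly the place where Horv\'ath, Lawrence and Willard replace the astronomically large Ramsey numbers of the original proof of Theorem~\ref{nr} by a bound that is only polynomial in $|\za|$.

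Granting the claim, the algorithm is immediate: for each of the $\binom{n}{N}\le\|f\|^{N}$ ways to single out $N$ ``live'' variables, set all the others to $0$; the surviving system has at most $N$ variables, each of degree below $\al+p^{\al}$, hence bounded total degree, so one may either try all $p^{\al}$ values of each live variable (a number of cases depending only on $p,\al,k$) or peel off the $\al$ $p$-adic layers, solving each layer modulo $p$ with Theorem~\ref{egyrszmo} over $\zp$ and expanding the next layer with bounded blow-up. Running over all choices and all $k$ equations yields the stated bound $O(\max_i\|f_i\|^{\al^{2}k\,p^{2\al^{2}}})$.

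The main obstacle is the support bound of the second step, and within it the unit-valued variables. Over a nilpotent ring one simply discards every monomial of large total degree, which bounds the number of variables per monomial outright; but $\za$ is not nilpotent, so a monomial all of whose variables are units stays active however many variables it has, and one must argue directly that surplus unit-valued variables are redundant — all while keeping $N$ polynomial in $|\za|$, since a Ramsey-sized bound, though still giving a ``polynomial'' algorithm for fixed $\za$, would blow the exponent far past $\al^{2}k\,p^{2\al^{2}}$.
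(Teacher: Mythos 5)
First, a point of reference: the paper does not prove Theorem~\ref{er} at all --- it is quoted from the manuscript \cite{HGLJWR15} and used as a black box in Section~\ref{3.} --- so there is no in-paper argument to compare yours against; your sketch has to stand on its own. It does not, because the claim at its centre is false. You assert that there is $N=N(p,\al,k)$ such that the system is solvable over $\za$ if and only if it has a solution in which all but at most $N$ of the variables are assigned $0$. Take $k=1$ and $f=y_1y_2\cdots y_n-1$, a sum of two monomials with $||f||=O(n)$. The equation $f=0$ is solvable over $\za$ (substitute $1$ everywhere), but any substitution sending even one variable to $0$ annihilates the product and gives $f=-1\neq 0$; so the minimal support of a solution is $n$, which is unbounded while $p$, $\al$ and $k$ stay fixed. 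No constant $N$ can exist.

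The mechanism that makes the ``zero out all but $r$ variables'' strategy work for Theorem~\ref{nr} is nilpotency: monomials containing many variables vanish identically, so a variable occurring only in such monomials is genuinely inert. But $\za$ is unital and far from nilpotent; its units keep a monomial alive no matter how many variables it contains, which is exactly why a monomial of unit-valued variables cannot be discarded and why the default value $0$ cannot witness solvability with bounded support. You sense this yourself when you single out the unit-valued variables as ``the genuinely new difficulty'' and defer the bookkeeping to Horv\'ath, Lawrence and Willard --- but that bookkeeping is the entire content of the theorem, and the reduction you build around it (enumerate $||f||^{N}$ supports, brute-force the live variables) cannot be repaired by any choice of $N$, so the argument has to be replaced rather than completed. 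Your first step (exponent reduction via Euler's theorem, $a^{e}=a^{e+\varphi(p^{\al})}$ for $e\ge\al$) is correct and harmless, and the final enumeration would be routine if the support bound held; the support bound, however, is where the theorem lives, and as you state it, it is wrong.
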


\subsection{Equation solvability problem over nilpotent rings\label{nil}}
The complexity of the equation solvability problem over finite nilpotent rings is known.
Horv\'{a}th \cite{HG11} proved that the equation solvability problem is solvable in polynomial time.
We give a new algorithm in Section~\ref{3.} that is much more efficient than Horv\'ath's.
In this part we show that we can characterize the complexity of the  equation solvability problem over nilpotent rings using the sigma equation solvability problem over special kind of nilpotent matrix rings.   
Hence we can apply the same ideas that we used in the proof of Theorem~\ref{fotetel}. 

 
Horv\'{a}th proved Theorem~\ref{nr} using Ramsey's theory.
He defined a number $r$ that depends only on the ring $\R$. 
Then he proved that the image of every polynomial can be obtained by substituting $0$ into all but $r$-many variables.
Thus one can decide whether or not $f=0$ is solvable over $\R$ in $O(||f||^r)$ time.
But the number $ r $ is huge in the size of the ring. 
Let $c$ be the characteristic of $\R$ and $t$ be the nilpotency class of $\R$. 
Let furthermore $m=(t-1)!\cdot c$. 
Then $r$ is greater than $m^{m^{\dots^{m}}}$, where the height of the tower in the exponent  is $t$.  
We give a new proof of Theorem~\ref{nr}  in Section~\ref{3.}.
Our algorithm is much more efficient than Horv\'ath's. 
We prove that $O\left( ||f||^{|\R|^{2\log {|\R|}} \log^5{|\R|} }\right) $ time is enough. 
   
First, we show that we can handle the equation solvability problem over nilpotent rings using the sigma equation solvability problem over special kind of nilpotent matrix rings. 
If $\R$ is a nilpotent ring then the complexity of the \emph{general} equation solvability problem over $\R$ is the same as the complexity of the \emph{sigma} equation solvability over $\R$.  
Indeed, we can rewrite every polynomial $f$ over $\R$ as a sum of monomials  in $O\left( ||f||^t\right) $ time, where $t$ is the nilpotency class of $\R$.
Now, 
$t = O \left( \log \left| \R \right| \right)$.
Hence, at the cost of an extra $\log \left| \R \right| $ factor in the exponent, 
we may assume that every input polynomial is given as a sum of monomials. 
Furthermore it is enough to consider the equation solvability problem over nilpotent rings with prime power characteristic, because every ring is a direct sum of rings of prime power characteristic, and the equation solvability problem can be handled componentwise.
Wilson \cite{RW73CM} characterizes finite nilpotent rings with prime power characteristic. 

\begin{theo}[Wilson]
	\label{wil}
	Let $\R$ be a finite nilpotent ring with characteristic $p^{\alpha}$, and let $\R$ have an independent generating set consisting of $m$ generators over $\mathbb{Z}_{p^{\alpha}}$.
	Then $\R$ is a homomorphic image of a ring $\M(m,\mathbb{Z}_{p^{\alpha}})$ of matrices over $\mathbb{Z}_{p^{\alpha}}$ where every entry on or below the main diagonal is a multiple of $p$. 
	 
\end{theo}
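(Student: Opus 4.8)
The plan is to read off both the matrix ring $\M(m,\za)$ and the homomorphism onto $\R$ from a single left regular representation, and then to indicate the refinement that both shrinks the matrices and accounts for the words ``multiple of $p$'' in the statement.

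First I would fix the nilpotency class $t$ of $\R$, so that $\R^{t}=0$, and the independent generating set $r_{1},\dots,r_{m}$, and pass to the unital ring $\R^{\#}=\za\cdot 1\oplus\R$, in which $\R$ is the nilpotent augmentation ideal. Since $\R^{\#}$ is generated as a unital $\za$-algebra by $r_{1},\dots,r_{m}$ and its augmentation ideal has nilpotency class at most $t$, the assignment $Y_{i}\mapsto r_{i}$ presents $\R^{\#}$ as a homomorphic image of the truncated free algebra $T=\za\langle Y_{1},\dots,Y_{m}\rangle/(\text{monomials of degree}\ge t)$; writing $\R^{\#}=T/K$, one checks that $K$ lies in the augmentation ideal $\mathfrak m$ of $T$ (constants map injectively), so that $\R\cong\mathfrak m/K$ as rings.

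Now $T$ is a free $\za$-module, a basis being the monomials of degree at most $t-1$, which I would list by \emph{decreasing} degree. Feeding $T$ into its own left regular representation gives a faithful homomorphism $\lambda\colon T\hookrightarrow\mathrm{End}_{\za}(T)=M_{N}(\za)$, where $N$ is the number of such monomials; because multiplication by a monomial of positive degree raises degree, $\lambda(\mathfrak m)$ consists of strictly upper triangular matrices. Put $\M(m,\za):=\lambda(\mathfrak m)$. As $K$ is a two-sided ideal of $T$ contained in $\mathfrak m$, its image $\lambda(K)$ is a two-sided ideal of $\M(m,\za)$ and $\M(m,\za)/\lambda(K)\cong\mathfrak m/K\cong\R$. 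Thus $\R$ is a homomorphic image of $\M(m,\za)$, and every entry on or below the main diagonal of a matrix in $\M(m,\za)$ equals $0$, hence is a multiple of $p$; this already proves the statement, with the shape of $\M(m,\za)$ controlled by $N$, which is a function of $m$ (and of $t$ and $\al$) — the independence of $\{r_{1},\dots,r_{m}\}$ being what makes $N$ depend on $m$ rather than on some arbitrary, possibly redundant, generating set.

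The step I expect to take the real work — and the one that explains why the statement permits nonzero multiples of $p$ below the diagonal instead of forcing $0$ — is the sharpening in which one represents $\R$ directly on $\R^{\#}$ rather than on the (generally much larger) algebra $T$. Here one must: pick a decomposition of $\R^{\#}$ into cyclic $\za$-summands compatible with the power filtration $\R\supseteq\R^{2}\supseteq\cdots$; order the induced basis by decreasing filtration level, breaking ties by non-decreasing order $p^{\beta}$ of the summand; and record a $\Zb$-homomorphism from a summand of order $p^{a}$ into one of order $p^{b}$ as an entry of $M_{N}(\za)$, which then lies in $p^{\,\al-\min(a,b)}\za$. The genuine obstacles are that the passage from the generalized matrix ring $\mathrm{End}_{\za}(\R^{\#})$ to honest matrices over $\za$ is not a ring embedding in general, so one is forced to lift the action of $\R$ to a free $\za$-module surjecting onto $\R^{\#}$ rather than to represent it naively; that the chosen basis must respect the power filtration and a cyclic decomposition simultaneously; and that the positions $a=b=\al$, where an entry may fail to be a multiple of $p$, have to be pushed strictly above the diagonal for every matrix actually arising from $\R$, so that on and below the diagonal one only ever sees multiples of $p$. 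Carrying this out is what gives a matrix ring whose size is governed by the additive rank of $\R$ rather than by the number of monomials, at the price of the weaker ``multiple of $p$'' pattern.
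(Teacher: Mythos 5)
First, a point of orientation: the paper does not prove Theorem~\ref{wil} at all --- it is quoted verbatim from Wilson \cite{RW73CM} and used as a black box --- so there is no internal proof to compare yours against; your argument has to stand on its own as a proof of Wilson's theorem, and as written it does not.

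Your first construction (truncated free algebra plus left regular representation, basis ordered by decreasing degree) is correct as far as it goes, but it proves a genuinely weaker statement than the one quoted. It exhibits $\R$ as a homomorphic image of a ring of \emph{strictly upper triangular} $N\times N$ matrices with $N=1+m+\dots+m^{t-1}$, where $t$ is the nilpotency class. Two things are lost. The matrix size is $N$, not $m$: the theorem's ring is $\M(m,\za)$ of $m\times m$ matrices, and Section~4 relies on the size being $m=O\left(\log|\R|\right)$ to obtain the exponent $\al^2 m^2 p^{2\al^2}$; replacing $m$ by your $N\approx(\log|\R|)^{t}$ inflates the exponent by a factor that is superpolynomial in $\log|\R|$ (still a constant depending only on $\R$, so the qualitative polynomial-time conclusion survives, but the quantitative bounds of the paper do not). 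More tellingly, in your construction every entry on or below the diagonal is $0$, so the ``multiple of $p$'' clause is vacuous --- a sign that the actual content of Wilson's theorem is being bypassed. That content is already visible for $\R=\za x$ with $x^2=px$ and $\al\ge 2$: here $\R^2=p\R$ is not an additive direct summand of $\R$, no cyclic decomposition adapted to the power filtration exists, and the regular representation on $\R^{\#}$ genuinely produces the matrix $\left(\begin{smallmatrix} p & 1 \\ 0 & 0\end{smallmatrix}\right)$, with a nonzero multiple of $p$ \emph{on} the diagonal. Your second paragraph correctly identifies exactly these difficulties --- the non-split filtration, the fact that $\mathrm{End}_{\za}(\R^{\#})$ is not naively a matrix ring over $\za$, and the need to force the positions where an entry can be a unit strictly above the diagonal --- but it resolves none of them; it is an accurate map of where Wilson's proof does its work, not a proof. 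So the proposal establishes enough for a weaker form of Theorem~\ref{nr}, but not Theorem~\ref{wil} itself, and the step that would close the gap is explicitly left open.
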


In fact for nilpotent rings it is enough to consider the equation solvability problem over such a matrix ring $\M$, 
since if the equation solvability problem is solvable in polynomial time for $\M$, 
then it is solvable in polynomial time for a factor $\M / \I$, as well. 
Indeed, let  $\widetilde{f}$ be a polynomial over $\M/\I$ and $f$ be any polynomial over $\M$ whose factor by $\I$ is $\widetilde{f}$. 
Then $\widetilde{f}=0$ is solvable over $\M/\I$  if and only if $ f=a $  is solvable over $\M$ for some $a\in \I$. 
This gives an extra $\left| \I \right|$ factor to the running time. 
However, 
$\left| \I \right| \leq \left| \M \right| \leq \left(p^\alpha\right)^{m^2}$, 
and we have $p^\alpha = O \left( \left| \R \right| \right)$, 
$m = O \left( \log \left| \R \right| \right)$. 
Thus, $\left| \I \right|  = O \left( \left| \R \right|^{\log^2 \left| \R \right|} \right)$, 
which only depends on $\R$, 
and thus can be forgotten about.

Thus it is enough to consider the sigma equation solvability problem over such a matrix ring of matrices over $\Zb_{p^{\alpha}}$ where every entry on or below the main diagonal of each matrix is a multiple of $p$. 
We can handle such matrix rings similarly as we do with the semmipattern groups. 
Hence we can use ideas of the proof described in Section~\ref{2.}.  
We give a new, efficient algorithm that decides the equation solvability problem over nilpotent matrix rings in Section~\ref{3.}.

\section{Equation solvability problem over semipattern groups \label{2.}\label{2.2}}

In this section we consider the equation solvability problem over semmipattern groups.
First we characterize the multiplication of matrices from $\tm$  in Lemma~\ref{mxszor}. 
We use this formula in our algorithm. 

\begin{lem}
	\label{mxszor}
	Let n be a natural number. For every $1 \leq  k \leq n$ let
	\[
	A_k =  \begin{pmatrix}
	s_{1,k} & a_{1,2,k}& a_{1,3,k} & \dots & a_{1,m,k} \\ 0 & s_{2,k} & a_{2,3,k} & \dots & a_{2,m,k}  \\ 0 & 0 & s_{3,k }& \dots & a_{3,m,k} \\  \vdots & \vdots & \vdots & \ddots & \vdots  \\ 0 & 0 & 0 & \dots & s_{m,k} \end{pmatrix} 
	\in \tm.
	\] 
	Let
	\[
	A_1A_2\dots{}A_n =  
	\begin{pmatrix}
	\sigma_{1} & \alpha_{1,2}& \alpha_{1,3} & \dots & \alpha_{1,m} \\ 
	0 & \sigma_{2} & \alpha_{2,3} & \dots & \alpha_{2,m}  \\ 
	0 & 0 & \sigma_{3}& \dots & \alpha_{3,m} \\ 
	\vdots & \vdots & \vdots & \ddots & \vdots  \\ 
	0 & 0 & 0 & \dots & \sigma_{m}\end{pmatrix} .
	\]
	Then 
	\begin{itemize}
		\item   for every $i=1,2, \dots, n$ we have
		\[\sigma_i= \prod_{k=1}^{n}s_{i,k};
		\]
		\item for every $1 \leq i < j \leq m$ we have
		
		\begin{align}
		\label{keplet}
		\notag
		\alpha_{i,j}=
		&\sum_{b=0}^{j-i-1}    
		\sum_{i<l_1<\dots<l_b<j}   
		\sum_{k_{b+1}=b+1}^{n}
		\sum_{k_{b}=b}^{k_{b+1}-1}    
		\dots
		\sum_{k_{2}=2}^{k_{3}-1}
		\sum_{k_{1}=1}^{k_{2}-1}
		\left(\prod_ {c_0=1}^{k_1-1} s_{i,c_0}\right) a_{i,l_1,k_1}
		\\
		&\left(\prod_ {c_1=k_1+1}^{k_{2}-1} s_{l_1,c_1}\right)
		a_{l_{1},l_{2},k_{2}} \left(\prod_ {c_{2}=k_{2}+1}^{k_{3}-1}     s_{l_{2},c_{2}}\right)
		a_{l_{2},l_{3},k_{3}}  \left(\prod_ {c_{3}=k_{3}+1}^{k_{4}-1}     s_{l_{3},c_{3}}\right)
		\\
		\notag
		&\dots{}  a_{l_{b-1},l_{b},k_{b}}  \left( \prod_ {c_{b}=k_{b}+1}^{k_{b+1}-1}     s_{l_{b},c_{b}}\right)
		a_{l_b,j,k_{b+1}}       \left( \prod_ {c_{b+1}=k_{b+1}+1}^{n} s_{j,c_{b+1}}\right).
		\end{align}
	\end{itemize}  
	The length of \eqref{keplet} is $O\left( n^m\right) $, and in particular is polynomial in $n$. 
\end{lem}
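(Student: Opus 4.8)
The plan is to expand the product $A_1 A_2 \cdots A_n$ entry by entry and then reorganize the resulting sum combinatorially. By the definition of matrix multiplication, the $(i,j)$ entry of $A_1 \cdots A_n$ equals
\[
\sum (A_1)_{i_0, i_1} (A_2)_{i_1, i_2} \cdots (A_n)_{i_{n-1}, i_n},
\]
the sum running over all sequences $i = i_0, i_1, \dots, i_n = j$ of indices in $\{1, \dots, m\}$. Since each $A_k \in \tm$ is upper triangular, a summand can be nonzero only when $i = i_0 \leq i_1 \leq \dots \leq i_n = j$. When $j = i$ the only such sequence is the constant one, and then each factor is the diagonal entry $(A_k)_{i,i} = s_{i,k}$, which yields $\sigma_i = \prod_{k=1}^n s_{i,k}$.

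Now fix $i < j$. I would classify each nondecreasing sequence from $i$ to $j$ by the steps at which it \emph{strictly} increases. Suppose these steps are $k_1 < k_2 < \dots < k_{b+1}$; there is at least one of them, so $b \geq 0$, and since the sequence climbs from $i$ to $j$ in exactly $b+1$ strict jumps, $b + 1 \leq j - i$, i.e. $b \leq j - i - 1$. Let $i < l_1 < \dots < l_b < j$ be the plateau values taken between consecutive jumps, and set $l_0 = i$, $l_{b+1} = j$. On a step $c$ that is not a jump, the contributed factor is the diagonal entry $s_{l_t, c}$ where $l_t$ is the current plateau value; on the jump step $k_t$ the contributed factor is the off-diagonal entry $a_{l_{t-1}, l_t, k_t}$. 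Reading the sequence from left to right and collecting the maximal runs of non-jump steps into the products over $c_0, c_1, \dots, c_{b+1}$ gives precisely one term of \eqref{keplet}, and conversely every term of \eqref{keplet} arises this way: the nested summations $\sum_{k_{b+1} = b+1}^{n} \cdots \sum_{k_1 = 1}^{k_2 - 1}$ are exactly the enumeration of the tuples $1 \leq k_1 < \dots < k_{b+1} \leq n$, and $\sum_{i < l_1 < \dots < l_b < j}$ enumerates the plateau values. Hence \eqref{keplet} is the complete list of nonzero summands, which proves the formula.

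For the length bound I would count monomials and bound the size of each, treating $m$ as a constant. In \eqref{keplet} the index $b$ takes at most $m - 1$ values, and for each $b$ there are $\binom{j - i - 1}{b} \leq 2^{m-2}$ choices of $l_1 < \dots < l_b$; these outer choices contribute only an $O(1)$ factor. The inner summations range over the $\binom{n}{b+1}$ tuples $k_1 < \dots < k_{b+1}$ with $b + 1 \leq j - i \leq m - 1$, so there are $O(n^{m-1})$ monomials altogether. Each monomial is a product of exactly $n$ elements of $\fq$: every step $1, \dots, n$ contributes one factor, either a diagonal entry $s$ at a non-jump step or an off-diagonal entry $a$ at a jump step (a telescoping count of the plateau products gives $n-(b+1)$ factors $s$ together with $b+1$ factors $a$). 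Hence each monomial has length $O(n)$, and the total length of \eqref{keplet} is $O(n^{m-1}) \cdot O(n) = O(n^m)$, which is polynomial in $n$ for fixed $m$.

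The only real work here is bookkeeping: one must check that the ranges of the nested $k$-summations and of the plateau products indexed by $c_0, \dots, c_{b+1}$ in \eqref{keplet} match the path decomposition on the nose — for instance that $\prod_{c_0 = 1}^{k_1 - 1} s_{i,c_0}$ really captures exactly the steps before the first jump and that the last product runs up to $n$. An alternative route is induction on $n$: writing $A_1 \cdots A_n = (A_1 \cdots A_{n-1}) A_n$ and splitting the last column index $l$ in $\sum_{l=i}^j (A_1 \cdots A_{n-1})_{i,l} (A_n)_{l,j}$ into the case $l = j$ (which lengthens the trailing $s_j$-product by one factor) and the case $l < j$ (which appends a new jump at step $n$). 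This works as well, but the case analysis is at least as intricate as the direct count, so I would present the combinatorial expansion.
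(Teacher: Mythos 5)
Your proposal is correct and follows essentially the same route as the paper: a direct expansion of the $(i,j)$ entry as a sum over nondecreasing index paths from $i$ to $j$, with each path encoded by the positions of its strict jumps (the off-diagonal factors) and the plateau values in between, which is exactly how the paper derives formula~\eqref{keplet}. Your count of the number of monomials (choosing $b$, the plateau values, and the jump positions) differs cosmetically from the paper's count via multisets of column indices, but both give the same $O\left(n^{m}\right)$ bound.
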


\begin{proof}
The lemma can be proved by induction on $n$.
However, instead of giving the technical induction proof, we  explain how one can arrive at this formula.

Let us consider  the $j^{th}$ element of the $i^{th}$ row $\alpha_{i,j}$ of the matrix $A_1A_2\dots{}A_n  $ $( 1 \leq i < j \leq m)$ .
We can express this element $\alpha_{i,j}$ with a sum of some appropriate products.
In every such product we multiply one element from each matrix $A_k$ $(1\leq k \leq n)$. 
(In the notation the index $k$ appears as the last index.) 
Furthermore the  index of the column of a term must equal with the  index of the row of the following term in every such product.
(Therefore a term of the form $s_{l_c,.}$ or $a_{l_c,l_{c+1},.}$ follows the term of the form $s_{l_c,.}$ or $a_{l_{c-1},l_{c},.}$.)
The row index of the first term is $i$, the  column index of the last term is $j$. 
The matrices $A_1, A_2, \dots{}, A_n $ are upper triangular matrices, hence the row  index of every term of every product is less than or equal to the column  index. 
(Thus for every term of the form $a_{l_c,l_{c+1},.}$, that are above the diagonal, we have $l_c< l_{c+1}$.)
Thus the $j^{th}$ element of the $i^{th}$ row $\alpha_{i,j}$ of the matrix $A_1A_2\dots{}A_n$  is a sum of products of $n$ terms such that 
\begin{itemize}
	\item the row index of the first term is $i$, the column index of the last term is $j$;
	\item the column  index of every term  equals to the  row index of the next term;
	\item the  row index of a term  is  at most the  row index of the next term.
\end{itemize}

Notice, that every $n$-term product is uniquely determined by those terms where the row index differs from the column index.  
Let two such consecutive terms of  the product be $a_{l_{c-1},l_{c},k_{c}}$ and $a_{l_{c},l_{c+1},k_{c+1}}$.
(Here $i \leq l_{c-1}< l_{c}< l_{c+1} \leq j$ and $1\leq k_c<k_{c+1}\leq n$.)
The product of the terms between these two terms have row and column index $l_c$ and their last index is more than $k_c$ and less than $k_{c+1}$.
Thus between the terms $a_{l_{c-1},l_{c},k_{c}}$ and $a_{l_{c},l_{c+1},k_{c+1}}$ can only be the product $s_{l_c,k_c+1}\cdot{}  s_{l_c,k_c+2}\cdot{} \dots{} \cdot{} s_{l_c,k_{c+1}-1}$.
Thus formula~\eqref{keplet} is proved. 

Now, we calculate the length of formula~\eqref{keplet}.  
Formula~\eqref{keplet} is a sum of products.
The length of every product is $n$, thus we need to calculate the number of products. 
Notice, that every product is uniquely determined by the column indices of the terms. 
More exactly we need to know the column indices of the first $n-1$ terms, because the column index of the last term is $j$. 
We can choose these indices from the set $\{i, i+1,\dots, j-1,j\}$.
The order of the selected elements does not matter. 
We only need to determine how many indices are equal to $i$ or to $i+1$, etc, or to $j $.
Thus we need to choose $n-1$ element from a set of $j-i+1$ elements such that repetitions are allowed.
Hence the number of products is 
\begin{align*}
&\binom{n-1 + j-i+1 -1}{n-1} =
\binom{n+j-i-1}{j-i}.
\end{align*}
Since every product has length $n$, the length of $\alpha_{i,j}$ is 
\[ \binom{n+j-i-1}{j-i}\cdot n =  O \left( n^{j-i+1} \right) = O \left( n^m \right). \]
Thus the length of $\alpha_{i,j}$ is at most $O \left( n^{m} \right)$ for every index $1\leq i < j \leq m$.
\end{proof} 

Let  $\szp$ be a semipattern group.
Let $F=T_1T_2\dots{}T_n$ be a polynomial over $\szp$. 
Thus $T_k$ can indicate a constant or a variable over $\szp$ $\left( 1 \leq  k \leq n \right)$.
Of course $T_k$ and $T_l$ can indicate the same constant or variable. 
Let 

\begin{align*}
T_k=
\begin{pmatrix}
 y_{1,k} & x_{1,2,k}& x_{1,3,k} & \dots & x_{1,m,k} \\ 
0 & y_{2,k} & x_{2,3,k} & \dots & x_{2,m,k}  \\ 
0 & 0 & y_{3,k }& \dots & x_{3,m,k} \\  
\vdots & \vdots & \vdots & \ddots & \vdots  \\ 
0 & 0 & 0 & \dots & y_{m,k} 
\end{pmatrix}.
\end{align*} 

If $T_k\in \szp$ indicates constant then $y_{i,k}$ is a constant in $\sg_i \leq \fqm$ and  $x_{i,j,k}$ is a constant in $\fq$ $(1 \leq i < j \leq m)$.
If $T_k\in \szp$ is a variable, then $y_{i,k}$ is a variable, that we can substitute  from $\sg_i$, and $x_{i,j,k}$ is a variable that we can substitute  from $\fq$.
Furthermore $T_k=T_l$  if and only if $y_{i,k}=y_{i,l}$ (for every $1\leq i \leq k$) and $x_{i,j,k}=x_{i,j,l}$ for every 
$1 \leq i < j \leq m$.

We can rewrite the  polynomial $F$ with this notation as
\begin{align*}
F&=
\begin{pmatrix}
 y_{1,1} & x_{1,2,1}& x_{1,3,1} & \dots & x_{1,m,1} \\ 
0 & y_{2,1} & x_{2,3,1} & \dots & x_{2,m,1}  \\ 
0 & 0 & y_{3,1 }& \dots & x_{3,m,1} \\  
\vdots & \vdots & \vdots & \ddots & \vdots  \\ 
0 & 0 & 0 & \dots & y_{m,1} 
\end{pmatrix} 
\begin{pmatrix}
 y_{1,2} & x_{1,2,2}& x_{1,3,2} & \dots & x_{1,m,2} \\ 
0 & y_{2,2} & x_{2,3,2} & \dots & x_{2,m,2}  \\ 
0 & 0 & y_{3,2 }& \dots & x_{3,m,2} \\  
\vdots & \vdots & \vdots & \ddots & \vdots  \\ 
0 & 0 & 0 & \dots & y_{m,2} 
\end{pmatrix}\dots \\
&\dots
\begin{pmatrix}
 y_{1,n} & x_{1,2,n}& x_{1,3,n} & \dots & x_{1,m,n} \\ 
0 & y_{2,n} & x_{2,3,n} & \dots & x_{2,m,n}  \\ 
0 & 0 & y_{3,n }& \dots & x_{3,m,n} \\  
\vdots & \vdots & \vdots & \ddots & \vdots  \\ 
0 & 0 & 0 & \dots & y_{m,n} 
\end{pmatrix}.
\end{align*}

After multiplying these matrices using Lemma~\ref{mxszor} we obtain 

\[
F=
\begin{pmatrix}
 f_{1} & g_{1,2}& g_{1,3} & \dots & g_{1,m} \\ 
0 & f_{2} & g_{2,3} & \dots & g_{2,m}  \\ 
0 & 0 & f_{3}& \dots & g_{3,m} \\  
\vdots & \vdots & \vdots & \ddots & \vdots  \\ 
0 & 0 & 0 & \dots & f_{m}
\end{pmatrix},
\]
where
\begin{align*}
 \notag f_{i}&=\prod_{k=1}^{n}y_{i,k}, \text{ and}\\ 
 g_{i,j}&=
\sum_{b=0}^{j-i-1}    
\sum_{i<l_1<\dots<l_b<j}   
\sum_{k_{b+1}=b+1}^{n}
\sum_{k_{b}=b}^{k_{b+1}-1}    
\dots
\sum_{k_{2}=2}^{k_{3}-1}
\sum_{k_{1}=1}^{k_{2}-1}
\left(\prod_ {c_0=1}^{k_1-1} y_{i,c_0}\right) x_{i,l_1,k_1}
\\
&\left(\prod_ {c_1=k_1+1}^{k_{2}-1} y_{l_1,c_1}\right)
 x_{l_{1},l_{2},k_{2}} 
\dots{} \left( \prod_ {c_{b}=k_{b}+1}^{k_{b+1}-1}     y_{l_{b},c_{b}}\right)
x_{l_b,j,k_{b+1}}       \left( \prod_ {c_{b+1}=k_{b+1}+1}^{n} y_{j,c_{b+1}}\right). \notag
\end{align*}

The polynomial $F$  can attain the unit matrix for  a substitution if and only if $f_i$ attains $1$ ($1 \leq i \leq m$) and $ g_{i,j}$ attains $0$ for the same substitution ($1 \leq i < j \leq m$). 
Thus $F=I$ is solvable over $\szp$ if and only if the  system of equations 
\begin{align*}
f_{i}|_{{\fq}, {\sg}_1, \dots{}, {\sg}_m } &=1 & (1 \leq i \leq j) \\
 g_{i,j}|_{{\fq}, {\sg}_1, \dots{}, {\sg}_m }&=0 & (1 \leq i < j \leq m)
\end{align*} is solvable over $\fq$.
This is a system of equations  over $\fq$ where the polynomials are given as sums of monomials.
Hence we can decide the solvability of this   system of equations in polynomial time by Theorem~\ref{egyrszmo}. 

The rewriting of $F$ over $\szp$ into the system of equations 
\begin{align*}
 f_{i}|_{{\fq}, {\sg}_1, \dots{}, {\sg}_m } &=1  & (1 \leq i \leq j)\\
 g_{i,j}|_{{\fq}, {\sg}_1, \dots{}, {\sg}_m }&=0 & (1 \leq i < j \leq m)
\end{align*} over $\fq$
can be done in $O\left( n^m\right) $ time by Lemma~\ref{mxszor}, 
and the length of each equation is $O \left( n^m \right)$. 
The number of equations is $\frac{m \cdot (m+1)}{2} = O \left( m^2 \right)$, which does not depend on $n$, only on the group $\szp$. 
By Theorem~\ref{egyrszmo} one can decide whether this system has a solution in $O\left( n^{m^3\cdot q}\right) $ time.

\section{The complexity of equation solvability problem over nilpotent matrix rings \label{3.}}

Let $\M$ be a ring of $m \times m$ matrices over $\Zb_{p^{\alpha}}$ where every entry on or below the main diagonal of each matrix in $\M$ is a multiple of $p$. 
Let $F$ be a polynomial over $\M$ given as a sum of monomials.  
Let $T_1 \dots{} T_n$ denote a monomial in the sum.  
Let 
\[
T_k=
\begin{pmatrix}
 a_{1,1,k}\cdot p & s_{1,2,k}& s_{1,3,k} & \dots & s_{1,m,k} \\ 
 a_{2,1,k}\cdot p   &  a_{2,2,k}\cdot p   & s_{2,3,k} & \dots & s_{2,m,k}  \\ 
 a_{3,1,k}\cdot p   &   a_{3,2,k}\cdot p   &   a_{3,3,k} \cdot p  & \dots & s_{3,m,k} \\  
\vdots & \vdots & \vdots & \ddots & \vdots  \\ 
 a_{m,1,k}\cdot p   &   a_{m,2,k}\cdot p   &   a_{m,3,k} \cdot p  & \dots &   a_{m,m,k} \cdot p 
\end{pmatrix}.
\]
If $T_k\in \M$ indicates constant then $a_{i,j,k}, s_{i,j,k}$ are constants in $\Zb_{p^{\alpha}}$.
If $T_k\in \M$ is a variable, then $a_{i,j,k}, s_{i,j,k}$ are variables, that we can substitute  from $\Zb_{p^{\alpha}}$.
Furthermore, $T_k=T_l$  if and only if $a_{i,j,k} \cdot p=a_{i,j,l} \cdot p$ and $s_{i,j,k}=s_{i,j,l}$ for every $i,j \in \{1,\dots ,m\}$.

We can rewrite the  monomial $T_1 \dots T_n$ with this notation as
\begin{align*}
T_1 \dots T_n&=
\begin{pmatrix}
 a_{1,1,1} \cdot p & s_{1,2,1}& s_{1,3,1} & \dots & s_{1,m,1} \\ 
  a_{2,1,1}\cdot p  &   a_{2,2,1}\cdot p   & s_{2,3,1} & \dots & s_{2,m,1}  \\ 
 a_{3,1,1}\cdot p   & a_{3,2,1}\cdot p   &   a_{3,3,1}\cdot p   & \dots & s_{3,m,1} \\  
\vdots & \vdots & \vdots & \ddots & \vdots  \\ 
 a_{m,1,1}  \cdot p &  a_{m,2,1}\cdot p   &   a_{m,3,1} \cdot p  & \dots & a_{m,m,1}  \cdot p 
\end{pmatrix} 
\dots \\
&\dots{}
\begin{pmatrix}
 a_{1,1,n}\cdot p  & s_{1,2,n}& s_{1,3,n} & \dots & s_{1,m,n} \\ 
  a_{2,1,n}\cdot p   &  a_{2,2,n}\cdot p   & s_{2,3,n} & \dots & s_{2,m,n}  \\ 
  a_{3,1,n}\cdot p   &   a_{3,2,n} \cdot p  & a_{3,3,n}  \cdot p & \dots & s_{3,m,n} \\  
\vdots & \vdots & \vdots & \ddots & \vdots  \\ 
a_{m,1,n} \cdot p  &  a_{m,2,n} \cdot p  &   a_{m,3,n}  \cdot p & \dots &   a_{m,m,n} \cdot p 
\end{pmatrix}.
\end{align*}

After multiplying these matrices we obtain 

\[
T_1 \dots{} T_n=
\begin{pmatrix}
 g_{1,1} & g_{1,2} & g_{1,3} & \dots & g_{1,m} \\ 
 g_{2,1} &  g_{2,2} & g_{2,3} & \dots & g_{2,m}  \\ 
 g_{3,1} &  g_{3,2} &  g_{3,3}& \dots & g_{3,m} \\  
\vdots & \vdots & \vdots & \ddots & \vdots  \\ 
 g_{m,1} &  g_{m,2} &  g_{m,3} & \dots &  g_{m,m}
\end{pmatrix}.
\]

We could compute the expressions for every $g_{i,j}$, similarly as in Lem\-ma~\ref{mxszor}. 
However, we do not need to know the actual formulas, we only need to understand how they look like. 

First $g_{i,j}$ is a polynomial given as a sum of monomials over $\mathbb{Z}_{p^{\alpha}}$.   
In every such monomial we multiply one term from each matrix $T_k$ $(1 \leq k \leq n)$.
Hence the length of every monomial is $n$. 
There are at most $ m^{n-1}$ monomials in every polynomial $g_{i,j}$ according to the usual multiplication of matrices. 
However, every nonzero monomial contains at most $\alpha-1$ terms from on or below the main diagonal, since the characteristic of $\mathbb{Z}_{p^{\alpha}}$ is $p^{\alpha}$.
Therefore  every nonzero monomial is of the form 
\begin{align*}
&\underbrace{s_{i,i_1,1} \cdot s_{i_i,i_2,2} \dots{} s_{i_{k_1-1},i_{k_1},k_1}}_{k_1 \text{- many terms }}\cdot a_{i_{k_1},l_{1},{k_1+1}} \cdot p \cdot
\underbrace{{s}_{l_{1},\bar{i}_{1},{k_1+2}} \cdot s_{\bar{i}_1,\bar{i}_2,{.}} \dots{} {s}_{\bar{i}_{k_2-1},\bar{i}_{k_2},.}}_{k_2 \text{- many terms }} \\
&\cdot a_{\bar{i}_{k_2},l_{2},{.}}  \cdot p \dots  
a_{\hat{i}_{k_{b-1}},l_{b-1},.} \cdot p  \cdot 
\underbrace{s_{l_{b-1},\tilde{i}_{1},.} \dots{} {s}_{\tilde{i}_{k_{b}-1},\tilde{i}_{k_{b}},n}}_{k_b \text{- many terms }}
\end{align*}
where $1 \leq b \leq \alpha $ and $0 \leq k_1, k_2, \dots k_b$. 
Notice that $k_1, k_2, \dots k_b \leq m-1$ holds as well. 
Indeed, every term $s_{i,j,k}$ is from above the main diagonal, hence $i <  j$.
Therefore $1 < i_1 < i_2 < \dots < i_{k_1} \leq m $ and thus $k_1\leq m-1$. 
Similarly $k_2, \dots, k_b \leq m-1$. 
Hence the length of every nonzero monomial is at most $(m-1) \cdot \alpha + \alpha - 1= m \alpha -1$.
In particular, if $m \alpha -1 < n$ then every monomial in $g_{i,j}$ equals $0$. 
Therefore there are at most $m ^{m \alpha -2}$ monomials in every polynomial $g_{i,j}$ and thus $||g_{i,j}||\leq ( m \alpha -1) \cdot m ^{m \alpha -2}\leq \alpha\cdot m ^{m  \alpha-1 }$.

The input polynomial $F$ is a sum of at most $||F||$ monomials $T_1\dots T_n$. 
Let 
\[
F=
\begin{pmatrix}
 f_{1,1} & f_{1,2} & f_{1,3} & \dots & f_{1,m} \\ 
 f_{2,1} &  f_{2,2} & f_{2,3} & \dots & f_{2,m}  \\ 
 f_{3,1} &  f_{3,2} &  f_{3,3}& \dots & f_{3,m} \\  
\vdots & \vdots & \vdots & \ddots & \vdots  \\ 
 f_{m,1} &  f_{m,2} &  f_{m,3} & \dots & f_{m,m}
\end{pmatrix}.
\] 
Then $f_{i,j}$ is the sum of at most $||F||$-many polynomials $g_{i,j}$ for every $i,j \in \{1,\dots m\}$.
Thus $||f_{i,j}|| \leq ||F|| \cdot \alpha m ^{m  \alpha-1 } = O \left( \left|\left| F \right|\right| \right)$, 
as $\alpha$ and $m$ depend only on the ring $\R$. 

The polynomial $F$  can attain the zero matrix for  a substitution if and only if $f_{i,j}$ attains zero for the same substitution for every $i,j \in \{ 1, \dots{}, m\}$. 
Thus $ F=0$ is solvable over $\M$ if and only if the  system of equations 
\begin{align*}
 & &  f_{i,j}=0  &  &i,j \in \{ 1, \dots{}, m\} & &
\end{align*}
is solvable over $\mathbb{Z}_{p^{\alpha}}$.
This is a system of equations over $\za$ where the polynomials are given as sums of monomials.
Hence we can decide the solvability of this system of equations in polynomial time by Theorem~\ref{er}.
 
The rewriting of $F$ over $\M$ into the system of equations 
\begin{align*}
& &  f_{i,j}=0  &  &i,j \in \{ 1, \dots{}, m\} & &
\end{align*}
can be done in $O\left(  ||F||\right) $ time, 
and $\left|\left| f_{i, j} \right| \right| = O \left( \left| \left| F \right| \right| \right)$. 
The number of equations is $m^2$, which does not depend on $||F||$, only on the ring $\M$. 
By Theorem~\ref{er} one can decide whether this system has a solution in $O\left( ||F||^{\al^2  \cdot m^2 \cdot p^{ 2\alpha^2 }}\right) $ time. 

Finally, 
we explain how this result can be applied to determine if an equation $f=0$ is solvable over an \emph{arbitrary} nilpotent ring $\R$ of characteristic $p^\alpha$. 
Let $\M$ be the matrix ring as in Theorem~\ref{wil}, 
and let $\R \cong \M / \I$.
Further, let $F$ denote the polynomial in $\M$ corresponding to $f$. 
Notice, that $p^\al$ was the characteristic of $\R$ in Theorem~\ref{wil}, hence $\al = O \left( \log|\R| \right) $.
Furthermore $m =  O \left( \log|\R| \right) $ and $p^{\al}= O \left( |\R| \right)$.
Therefore $\al^2  \cdot m^2 \cdot p^{ 2\alpha^2 }=  O \left(  |\R|^{2\log \R} \log^4|\R| \right)$.
Hence, one can decide if $F=0$ is solvable over $\R \cong \M/\I$ in $O\left( ||F||^{|\R|^{2\log \R} \log^4|\R|}\right) $ time. 
Thus, one can decide if the equation $f=0$ is solvable over 
$\R$ in $O\left( \left|\I\right| \cdot ||F||^{|\R|^{2\log \R} \log^4|\R|}\right) $ time. 
Now, $\left| \I \right| \leq \left| \M \right| \leq \left(p^\alpha\right)^{m^2} = O \left( \left| \R \right|^{\log^2 \left| \R \right|} \right)$, 
which only depends on $\R$. 
Therefore, 
if $f$ is given as a \emph{sum of monomials}, 
then $\left| \left| F \right| \right| = O \left( \left| \left| f \right| \right| \right)$, 
and 
one can decide $ f=0$ over an arbitrary nilpotent ring 
$\R$ in $O\left( ||f||^{|\R|^{2\log \R} \log^4|\R| }\right) $ time, 
as well. 
If, however, 
$f$ is an arbitrary polynomial over $\R$, 
then after rewriting it as a sum of monomials we have 
$\left| \left| F \right| \right| = O \left( \left| \left| f \right| \right|^{\log \left| \R \right|} \right)$, 
giving an extra $\log \left| \R \right|$ factor in the exponent. 
Thus, 
one can decide $ f=0$ over an arbitrary nilpotent ring 
$\R$ in $O\left( ||f||^{|\R|^{2\log \R} \log^5|\R|}\right) $ time.

\newpage


\end{document}